\newtheorem{theorem}{Theorem}[section]
\newtheorem*{theorem*}{Theorem}
\newtheorem*{corollary*}{Corollary}
\newtheorem{corollary}[theorem]{Corollary}
\newtheorem{proposition}[theorem]{Proposition}
\newtheorem{sublemma}{}[theorem]
\theoremstyle{definition}
\theoremstyle{remark}
\numberwithin{equation}{section}
\begin{document}

\title[$k$-loose elements]{$k$-loose elements and $k$-paving matroids}

\author{Jagdeep Singh}
\address{Department of Mathematics and Statistics\\
Mississippi State University\\
Starkville, Mississippi 39762}
\email{singhjagdeep070@gmail.com}

\subjclass{05B35, 94B65}
\date{\today}
\keywords{$k$-paving matroid, $k$-loose elements, Reed-Muller codes}

\begin{abstract}
For a matroid of rank $r$ and a non-negative integer $k$, an element is called $k$-loose if every circuit containing it has size greater than $r-k$. Zaslavsky and the author characterized all binary matroids with a $1$-loose element. In this paper, we establish a sharp linear bound on the size of a binary matroid, in terms of its rank, that contains a $k$-loose element. A matroid is called $k$-paving if all its elements are $k$-loose. Rajpal showed that for a prime power $q$, the rank of a $GF(q)$-matroid that is $k$-paving is bounded. We provide a bound on the rank of $GF(q)$-matroids that are cosimple and have two $k$-loose elements. Consequently, we strengthen the result of Rajpal by providing a bound on the rank of $GF(q)$-matroids that are $k$-paving. Additionally, we provide a bound on the size of binary matroids that are $k$-paving. 
\end{abstract}

\maketitle

\section{Introduction}
The notation and terminology follow \cite{text} unless stated otherwise. All graphs and matroids considered here are finite and simple. For a matroid of rank $r$ and a non-negative integer $k$, an element $t$ of $M$ is called \textbf{$k$-loose} if every circuit of $M$ that contains $t$ has size greater than $r-k$. 
A matroid $M$ is called \textbf{$k$-paving} if every element of $M$ is $k$-loose. Note that a matroid $M$ is paving if and only if $M$ is $1$-paving. Acketa \cite{acketa} provided a characterization of all $1$-paving matroids that are binary, while Oxley \cite{ternary_pav} determined all ternary $1$-paving matroids. Zaslavsky and the author characterized all binary matroids with a $1$-loose element \cite{sinzas}. 
In Section 2, we show that the size of a binary matroid that contains a $k$-loose element is linear in terms of its rank. In particular, we provide the following sharp bound.

\begin{theorem}
\label{size_bound_one_k_loose}
For $r \geq 5$ and $0 \leq k \leq \frac{r-2}{3}$, let $M$ be a simple binary matroid of rank $r$ with no coloops and a $k$-loose element. Then $|E(M)| \leq 2^k(r-k+1)$.
\end{theorem}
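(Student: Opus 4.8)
The plan is to contract $t$, recast ``$t$ is $k$-loose'' as a purely combinatorial condition on $M/t$ together with a distinguished set, and then induct on $k$, each step peeling off a cocircuit and halving both the parameter and the bound.

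Fix a $GF(2)$-representation of $M$ in which $t$ is the first standard basis vector $e_1$. Because $k\le\frac{r-2}{3}$ and $r\ge 5$ we have $k\le r-3$, so $t$ lies in no triangle and $N:=M/t$ is again a simple binary matroid, now of rank $n:=r-1$, with no coloops (an element $x\ne t$ is a coloop of $M/t$ if and only if it is a coloop of $M$). Write each element $s$ of $M\setminus t$ as $(\phi(s),\pi(s))$, where $\pi$ drops the first coordinate and $\phi(s)\in GF(2)$, and set $D:=\{s:\phi(s)=1\}$. Using the fact that in a binary matroid the cycle space is precisely the set of disjoint unions of circuits, one checks that $t$ is $k$-loose in $M$ if and only if every circuit of $N$ with at most $n-k$ elements meets $D$ in an even number of elements, and that $t$ is not a coloop of $M$ if and only if $D$ is not a cocycle (disjoint union of cocircuits) of $N$. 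Since $|E(M)|=|E(N)|+1$, Theorem~\ref{size_bound_one_k_loose} follows from the statement: \emph{if $N$ is a simple binary matroid of rank $n$ with no coloops, $D\subseteq E(N)$ is not a cocycle of $N$, and every circuit of $N$ of size at most $n-k$ meets $D$ evenly, then $|E(N)|\le 2^{k}(n-k+2)-1$.}

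I would prove this by induction on $k$, using the following fact: if $N$ is simple of rank $n$ and corank at least $2$, then the \emph{non-spanning} circuits of $N$ span its cycle space. (Take a basis $B$; among the fundamental circuits $C(e,B)$, $e\notin B$, at most one is spanning, since two spanning ones would force two elements outside $B$ to coincide; that one, if it exists, can be replaced by its symmetric difference with a non-spanning fundamental circuit, which has size at most $n$ because simplicity makes every non-spanning fundamental circuit have at least three elements.) For the base case $k=0$ the hypothesis says every non-spanning circuit meets $D$ evenly; if $N$ had corank at least $2$ this fact would make $\mathbb{1}_D$ orthogonal to the whole cycle space, i.e.\ $D$ a cocycle, a contradiction, so $|E(N)|\le n+1=2^{0}(n-0+2)-1$. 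For the inductive step $k\ge 1$, the same argument shows that either $|E(N)|\le n+1$ (and then $2^{k}(n-k+2)-1\ge n+1$ finishes it) or there is a non-spanning circuit $C$ with $|C\cap D|$ odd, whence $n-k+1\le|C|\le n$. Choose such a $C$ of minimum size, let $F=\mathrm{cl}_{N}(C)$, extend $F$ to a hyperplane $H$ of $N$, and set $C^{*}=E(N)\setminus H$. After deleting the coloops of $N|H$ (which only decreases its size), the pair $(N|H,\,D\cap H)$ satisfies the hypothesis with parameter $k-1$: every circuit of $N|H$ of size at most $(n-1)-(k-1)=n-k$ is a circuit of $N$ of the same size, hence meets $D\cap H$ evenly; $N|H$ is simple binary with no coloops; and $D\cap H$ is not a cocycle of $N|H$ because $C\subseteq H$ meets it oddly. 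By induction $|H|\le 2^{k-1}\big((n-1)-(k-1)+2\big)-1=2^{k-1}(n-k+2)-1$, so $|E(N)|=|H|+|C^{*}|\le 2^{k-1}(n-k+2)-1+|C^{*}|$.

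Everything then reduces to the inequality $|C^{*}|\le 2^{k-1}(n-k+2)$, and this is the main obstacle: one must choose the hyperplane $H\supseteq F$ so that its complementary cocircuit $C^{*}$ is not too large. This is exactly where $k\le\frac{r-2}{3}$ is used. The flat $F=\mathrm{cl}_{N}(C)$ has rank between $n-k$ and $n-1$, so $N/F$ has rank at most $k$; bounding $|E(N)\setminus F|$, equivalently the size of a suitable cocircuit of $N/F$, requires controlling the parallel classes of $N/F$ and identifying which circuits of $N/F$ lift to circuits of $N$ of size at most $n-k$ (so that the even-intersection hypothesis applies), and combining this with the minimality of $|C|$. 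One must also check that the relation ``$k'\le\frac{(\text{rank})-1}{3}$'' is preserved along the induction, which is why the rank should drop by exactly one at each step and coloops need care. The factor $2^{k}$ emerges because the induction runs for exactly $k$ steps, each contributing one factor of $2$; sharpness is witnessed, for example, by $U_{r,r+1}$ when $k=0$.
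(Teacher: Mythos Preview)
Your reformulation via $N=M/t$ and the set $D$ is correct, and the base case $k=0$ using the fact that non-spanning circuits of a simple binary matroid of corank $\ge 2$ span the cycle space is fine. The gap is exactly where you flag it: you never prove $|C^{*}|\le 2^{k-1}(n-k+2)$. Your inductive scheme bounds $|H|$ and $|C^{*}|$ \emph{separately}, which is strictly stronger than bounding their sum, and nothing you have set up forces the complementary cocircuit to be this small. Knowing only that $N/F$ has rank at most $k$ gives no control on $|E(N/F)|$ or on its cocircuits without a bound on parallel-class sizes, and your sketch (``controlling parallel classes of $N/F$ and identifying which circuits lift to circuits of size $\le n-k$'') is not an argument. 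The minimality of $|C|$ gives you that no circuit of $N$ of size $<|C|$ meets $D$ oddly, but that does not obviously bound parallel classes in $N/F$, since a parallel pair $\{x,y\}$ in $N/F$ corresponds to a circuit of $N$ contained in $F\cup\{x,y\}$, whose size can be as large as $|F|+2$. There is also tension in your write-up: the reformulated statement carries no constraint relating $k$ and $n$, yet you say the constraint $k\le\frac{r-2}{3}$ is ``exactly where'' the cocircuit bound is used; and deleting coloops from $N|H$ drops the rank, so if a constraint is needed it is not preserved in the form you suggest.

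For comparison, the paper's proof is not inductive at all: it fixes a basis so that the column of the $k$-loose element has its first $k$ entries equal to zero and ones elsewhere, partitions the remaining non-identity columns into at most $2^{k}-1$ blocks according to their first $k$ entries, and shows directly (via two short sublemmata about symmetric differences of supports) that each block has size at most $r-k+1$ (and at most $r-k$ for the $k$ blocks with a single nonzero top entry). Summing gives $|E(M)|\le 2^{k}(r-k+1)$. The hypothesis $k\le\frac{r-2}{3}$ enters only in comparing two upper bounds on the number of ``type~C'' columns within a block. If you want to salvage the inductive approach, you would need either a genuinely different decomposition or a direct bound on $|E(N)\setminus F|$ that does not pass through $|C^{*}|$ alone.
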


We say a matroid $M$ is a $GF(q)$-matroid if $M$ is representable over the field of $q$ elements. Section 3 considers $GF(q)$-matroids that have two $k$-loose elements. We show that the rank of such a matroid is bounded unless the two $k$-loose elements form a cocircuit of size two. The following is the precise statement.

\begin{theorem}
\label{rank_bound_two_loose}
Let $M$ be a simple $GF(q)$-matroid with no coloops that has two $k$-loose elements $e$ and $f$. Then $r(M) \leq (q+1)(k-1)+2q$, or $\{e,f\}$ is a cocircuit of $M$.
\end{theorem}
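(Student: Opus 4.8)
The plan is to suppose that $\{e,f\}$ is not a cocircuit and to bound $r:=r(M)$ directly, by passing to a linear representation and reading the $k$-looseness of $e$ and $f$ as conditions on the supports of their coordinate vectors. Since $M$ has no coloops, neither $e$ nor $f$ is a coloop; and since $\{e,f\}$ is not a cocircuit, the complementary set $E(M)\setminus\{e,f\}$ has rank $r$ (equivalently, $f$ is not a coloop of $M\setminus e$), so it contains a basis $B$ of $M$. I would fix a $GF(q)$-representation of $M$ in which the elements of $B$ are the standard basis vectors of $GF(q)^r$; for $x\in E(M)\setminus B$ write $x=\sum_{b\in B}x_b\,b$ and $\operatorname{supp}(x)=\{b\in B:x_b\neq 0\}$, so that the fundamental circuit is $C(x,B)=\{x\}\cup\operatorname{supp}(x)$. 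Set $B_e=\operatorname{supp}(e)$ and $B_f=\operatorname{supp}(f)$.

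The first easy consequences are that, since $C(e,B)$ is a circuit through the $k$-loose element $e$, one has $|C(e,B)|\ge r-k+1$ and hence $|B_e|\ge r-k$; likewise $|B_f|\ge r-k$, so $|B_e\cap B_f|\ge |B_e|+|B_f|-|B|\ge r-2k$. The crux of the argument — and the step I expect to be the main obstacle — is to extract a sharper consequence of $k$-looseness: for every scalar $\alpha\in GF(q)$,
\[ |\operatorname{supp}(e-\alpha f)|\ \ge\ r-k-1 . \]
For $\alpha=0$ this is the bound on $|B_e|$ just noted; for $\alpha\neq0$, if instead $|\operatorname{supp}(e-\alpha f)|\le r-k-2$, then the set $Z=\{f\}\cup\operatorname{supp}(e-\alpha f)$ has at most $r-k-1$ elements, does not contain $e$, and satisfies $e=\alpha f+(e-\alpha f)\in\operatorname{cl}(Z)$; a maximal independent subset of $Z$ together with $e$ then contains a circuit through $e$ of size at most $r-k$, contradicting that $e$ is $k$-loose. (Simplicity of $M$ rules out $e=\alpha f$.) The conceptual point is to read $k$-looseness of $e$ not merely as "$B_e$ is large" but as a family of general-position statements for $e$ with respect to coordinate subspaces enlarged by the single extra point $f$.

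To finish, the displayed inequality says that for each nonzero $\alpha$ the number of coordinates $b\in B$ with $e_b=\alpha f_b$ is at most $k+1$. I would sum this over the $q-1$ nonzero scalars and interchange the order of summation: a coordinate $b$ with $e_b\neq0$ and $f_b\neq0$ is counted exactly once (for $\alpha=e_b f_b^{-1}$), a coordinate with $e_b=f_b=0$ is counted $q-1$ times, and every other coordinate not at all, giving
\[ (q-1)\,\bigl|B\setminus(B_e\cup B_f)\bigr|\ +\ |B_e\cap B_f|\ \le\ (q-1)(k+1). \]
Dropping the first term and combining with $|B_e\cap B_f|\ge r-2k$ yields $r-2k\le (q-1)(k+1)$, i.e. $r\le (q-1)(k+1)+2k=(q+1)(k-1)+2q$, which is the desired bound. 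I note that this argument needs no connectivity hypothesis; in the degenerate case where $M$ is disconnected with $e$ and $f$ in different components, $B_e$ and $B_f$ lie in disjoint parts of $B$, so $|B_e\cap B_f|=0$ and the same inequality already forces $r\le 2k$, comfortably within the stated bound.
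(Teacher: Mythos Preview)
Your proof is correct and follows essentially the same approach as the paper's: choose a basis $B$ avoiding $\{e,f\}$, and use $k$-looseness to bound, for each nonzero scalar, the number of coordinates on which $e$ and that scalar multiple of $f$ agree, then add these bounds over all nonzero scalars. The paper first normalizes $e$ to a $0/1$ vector and counts, for each nonzero $\beta\in GF(q)$, how many of the last $r-k$ entries of $f$ equal $\beta$; your version avoids the normalization by working directly with $\operatorname{supp}(e-\alpha f)$ and packaging the count via the double sum, but the underlying short-circuit argument and the resulting arithmetic are the same.
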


An immediate consequence of Theorem \ref{rank_bound_two_loose} is the following.

\begin{corollary}
\label{rank_bound_paving}
Let $M$ be a simple $GF(q)$-matroid with no coloops that is $k$-paving. Then $M$ is a circuit or $r(M) \leq (q+1)(k-1)+2q$. 
\end{corollary}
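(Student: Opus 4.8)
The plan is to deduce the corollary directly from Theorem \ref{rank_bound_two_loose} by running it over all pairs of elements. Since $M$ is $k$-paving, \emph{every} element of $M$ is $k$-loose. First I would dispose of the degenerate cases: as $M$ is simple with no coloops, it cannot have exactly one element, so either $|E(M)| = 0$, in which case $r(M) = 0 \le (q+1)(k-1)+2q$ holds trivially (using $q \ge 2$ and $k \ge 0$), or $|E(M)| \ge 2$, which we now assume. We may also assume $r(M) > (q+1)(k-1)+2q$, since otherwise there is nothing to prove.

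Next I would fix an arbitrary pair of distinct elements $e, f \in E(M)$. Both are $k$-loose, so Theorem \ref{rank_bound_two_loose} applies to the pair $\{e,f\}$; as the rank inequality of that theorem is assumed to fail for $M$, the theorem forces $\{e,f\}$ to be a cocircuit of $M$. Because $e$ and $f$ were arbitrary, \emph{every} $2$-element subset of $E(M)$ is a cocircuit of $M$. Passing to the dual, every $2$-element subset of $E(M^*) = E(M)$ is then a circuit of $M^*$. In particular $M^*$ has no loops, and no $2$-element subset of $M^*$ is independent, so $r(M^*) \le 1$; hence $M^* \cong U_{1,n}$ where $n = |E(M)|$, and therefore $M \cong U_{n-1,n}$, that is, $M$ is a circuit. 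This yields the desired dichotomy.

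I do not expect a genuine obstacle here — the corollary is essentially a bookkeeping consequence of Theorem \ref{rank_bound_two_loose}. The only points that require a little care are the small-cardinality cases (checking that the hypotheses "simple" and "no coloops" rule out $|E(M)| = 1$ and make $|E(M)| = 0$ harmless), and the translation of "every pair of elements is a cocircuit" into "the dual is a rank-one uniform matroid"; it is precisely here that $k$-paving is used, rather than merely the existence of two $k$-loose elements, since it lets us invoke Theorem \ref{rank_bound_two_loose} simultaneously for all $\binom{n}{2}$ pairs.
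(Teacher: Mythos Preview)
Your argument is correct and follows the same route as the paper: assume the rank bound fails, apply Theorem~\ref{rank_bound_two_loose} to every pair of elements to conclude that each pair is a $2$-cocircuit, and deduce that $M$ is a circuit. The paper's proof simply asserts that last implication, whereas you spell it out via the dual (every $2$-subset of $M^*$ is a circuit, so $M^* \cong U_{1,n}$) and also handle the $|E(M)|\le 1$ edge cases explicitly; these additions are sound but do not change the strategy.
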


Rajpal \cite[Proposition 8]{rajpal2} demonstrated the existence of a rank bound for $k$-paving matroids representable over $GF(q)$.
Corollary \ref{rank_bound_paving} strengthens this result by giving an explicit rank bound for $GF(q)$-matroids that are $k$-paving. Observe that paving matroids are precisely $1$-paving matroids. Therefore by Corollary \ref{rank_bound_paving}, a paving matroid representable over $GF(q)$ has rank at most $2q$, unless it is a circuit.

A binary $k$-paving matroid is \textit{maximal} if it is not a proper minor of any other binary $k$-paving matroid. Rajpal \cite{rajpal2} established a connection between maximal binary $k$-paving matroids and Reed-Muller codes, proving the following.

\begin{proposition}
For $k \geq 1$, let $G$ be a generator matrix of the Reed-Muller code $R(1,k+2)$. Then the vector matroid of $G$ is a maximal binary $k$-paving matroid.
\end{proposition}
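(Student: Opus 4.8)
The plan is to identify the matroid explicitly and then reduce maximality to ruling out one‑element extensions and coextensions. A generator matrix $G$ of $R(1,k+2)$ has as its $2^{k+2}$ columns exactly the vectors $(1,x)^{T}$ with $x\in GF(2)^{k+2}$, so up to row operations (which do not change the vector matroid) $M:=M[G]$ is the rank‑$(k+3)$ simple binary matroid $AG(k+2,2)$. Three distinct columns $(1,x_1)^{T},(1,x_2)^{T},(1,x_3)^{T}$ cannot sum to $0$, since their first coordinates sum to $1$; hence $M$ has no circuit of size at most $3$, so every circuit has size greater than $3=r(M)-k$, i.e.\ $M$ is $k$-paving. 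Its $4$-element circuits are exactly the affine $2$-flats of $GF(2)^{k+2}$.

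Next I would observe that being $k$-paving is minor-closed: deleting or contracting a single element lowers both the rank and the size of every circuit by at most $1$, so the defining inequality ``every circuit has size $>r-k$'' is preserved. Consequently, if $M$ were a proper minor of a binary $k$-paving matroid, then removing the extra elements one at a time and applying minor-closedness would present $M$ as $N\ba e$ or as $N/e$ for some binary $k$-paving matroid $N$; hence it suffices to show that $M$ has no binary $k$-paving single-element extension and no binary $k$-paving single-element coextension.

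For a single-element extension $N\ba e=M$: if $e$ is a loop, a parallel element, or a coloop, then $N$ has a circuit of size $1$, of size $2$, or (being $M\oplus U_{1,1}$, of rank $k+4$) of size $4$, none of which exceeds $r(N)-k$; otherwise $N$ is representable as $[\,G\mid v\,]$ with $v\ne 0$ and $v$ not a column of $G$, which forces $v=(0,y)^{T}$ with $y\ne 0$, and then $\{v,(1,0)^{T},(1,y)^{T}\}$ is a triangle. Either way $N$ is not $k$-paving. For a single-element coextension $N/e=M$: the loop and coloop subcases again force $N$ to have a loop or to equal $M\oplus U_{1,1}$, hence not $k$-paving; otherwise $r(N)=k+4$ and $N$ is representable as $\left(\begin{smallmatrix} 1 & a^{T}\\ \mathbf{0} & G\end{smallmatrix}\right)$ for some $a\in GF(2)^{E(M)}$, with the first column being $e$. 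For any circuit $D$ of $M$ the matching columns of $N$ sum to $\bigl(\sum_{i\in D}a_i,\ \mathbf{0}\bigr)^{T}$, so if some $4$-circuit $D$ of $M$ has $\sum_{i\in D}a_i=0$, then $D$ is a circuit of $N$ of size $4=r(N)-k$, and $N$ is not $k$-paving.

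The remaining point — which I expect to be the crux — is that not every affine $2$-flat of $GF(2)^{k+2}$ has odd $a$-sum. Since $k+2\ge 3$, choose linearly independent $u,v,w$ and set $P_1=\langle u,v\rangle$, $P_2=\langle u,w\rangle$, and $P_3=P_1\triangle P_2=\{v,u+v,w,u+w\}$; here $P_1\cap P_2=\{0,u\}$, so $P_3$ has four elements, and they sum to $0$, so $P_3$ is again a $4$-element circuit of $M$. As $P_1\triangle P_2\triangle P_3=\emptyset$, one gets $\sum_{x\in P_1}a(x)+\sum_{x\in P_2}a(x)+\sum_{x\in P_3}a(x)=0$ in $GF(2)$, so the three sums cannot all be odd; this produces the required $4$-circuit of $M$ with even $a$-sum and finishes the coextension case. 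This parity argument is exactly where the hypothesis $k\ge 1$ enters: in dimension $2$ there is a single affine $2$-flat, on which $a$ may well be odd. Combining the extension and coextension cases shows that $M$ is a maximal binary $k$-paving matroid.
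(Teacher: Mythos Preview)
Your argument is correct, but there is nothing to compare it to: the paper does not prove this proposition at all---it is quoted from Rajpal \cite{rajpal2} as background, with no proof given here. So you have supplied a self-contained proof where the paper only gives a citation.

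Your route is clean: identify $M[G]$ with $AG(k+2,2)$, reduce maximality (via minor-closedness of $k$-paving) to ruling out one-element binary extensions and coextensions, dispatch extensions by noting that any new column must have first coordinate $0$ and hence lies in a triangle, and dispatch coextensions by the parity trick on three affine $2$-flats $P_1,P_2,P_3$ with $P_1\triangle P_2\triangle P_3=\emptyset$, which forces some $4$-circuit to lift to a $4$-circuit in any coextension. The use of $k\ge 1$ to guarantee three independent vectors $u,v,w$ in $GF(2)^{k+2}$ is exactly right.

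One small point of exposition: the sentence ``deleting or contracting a single element lowers both the rank and the size of every circuit by at most $1$, so the defining inequality is preserved'' does not quite justify minor-closedness as written, since an inequality $a>b$ is not preserved if each side may drop by at most $1$ independently. What actually makes it work is that deletion never shrinks a surviving circuit, while contraction of a non-loop drops the rank by exactly $1$ whenever a circuit can shrink by $1$; a one-line case check fixes this. This is cosmetic and does not affect the correctness of the proof.
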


Additionally, Rajpal determined all binary $2$-paving matroids. In Section 4, we provide a bound on the size of binary $k$-paving matroids.

\begin{theorem}
\label{size_bound_paving}
Let $M$ be a simple binary $k$-paving matroid of rank $r$, where $M$ is not a circuit and $r \geq k+4$, and let $t = \lfloor \frac{3k+1-r}{2} \rfloor$. Then  $r(M) \leq 3k+1$, and $$|E(M)| \leq (r+1) + \sum_{i=0}^t \binom{k}{i}.$$
\end{theorem}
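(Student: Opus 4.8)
The plan is to first pin down the rank bound $r(M) \le 3k+1$, and then bound $|E(M)|$ by a careful restriction argument built on the structure forced by $k$-pavingness. For the rank bound, observe that every element of $M$ is $k$-loose, so in particular we may pick two distinct elements $e, f$ (possible since $M$ is simple and not a circuit, hence has rank at least $k+4 \ge 5$ and more than two elements). If $\{e,f\}$ were a cocircuit of size two, then since $M$ is simple that cocircuit would have to interact with the $k$-looseness hypothesis: deleting $f$ creates a coloop or, more usefully, $\{e,f\}$ being a cocircuit in a simple matroid that is $k$-paving with $r \ge k+4$ can be ruled out directly, essentially because a $2$-cocircuit forces a series pair, and contracting everything else shows a short circuit through $e$ of size at most $r-k$ once $r$ is large enough. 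Hence Theorem 1.3 applies to $e$ and $f$, giving $r(M) \le (q+1)(k-1) + 2q$ with $q = 2$, i.e. $r(M) \le 3k+1$. (I would double-check the small-rank edge cases against the hypothesis $r \ge k+4$; the two bounds are consistent precisely in the stated range, and this is where the parameter $t = \lfloor (3k+1-r)/2 \rfloor \ge 0$ becomes meaningful.)

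For the size bound, the strategy is to exploit that in a simple binary $k$-paving matroid, every cocircuit is large: a cocircuit $C^*$ has the property that $M \backslash C^*$ has corank one less, and $k$-looseness translates (via circuit–cocircuit orthogonality in the dual) into a lower bound on cocircuit sizes, namely every cocircuit has size at least $r - (r-k) = $ roughly $k$ more than the trivial bound — concretely, I expect every cocircuit of $M$ to have size at least $r - k + 1$ reinterpreted appropriately, which for binary matroids is exactly the statement dual to "no short circuits through any element." I would then set up an induction on $r$: choose a hyperplane $H$ of $M$, so $M|H$ is a simple binary matroid of rank $r-1$, and the complementary cocircuit $E(M) \setminus H$ is large. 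The key is that $M|H$ is again "close to" $k$-paving — more precisely, elements of $H$ lying on short circuits within $H$ would have to be short circuits of $M$ too, so $M|H$ retains enough of the paving structure to feed the induction, though the rank drop means we are effectively incrementing the gap $3k+1 - r$ and hence the summation index $t$.

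The main obstacle, and where I would spend the most care, is making the induction close cleanly: restricting to a hyperplane drops the rank by one but does not obviously preserve "$k$-paving," so I need the right relative notion — perhaps "$M|H$ has all elements $(k)$-loose with respect to the original rank $r$," which is a weaker, rank-shifted condition. Tracking how this condition degrades under restriction, and verifying that the binomial sum $\sum_{i=0}^t \binom{k}{i}$ grows exactly fast enough to absorb the at-most-one-new-element-class gained at each step (the "$+1$" in $r+1$ comes from the base case, a circuit-like or near-circuit configuration), is the technical heart. An alternative, and possibly cleaner, route avoids induction: bound the number of elements outside a fixed large cocircuit directly by counting, in a binary representation, how many columns can avoid creating a short circuit, which is a sphere-packing / Reed–Muller-flavoured count — this connects to Rajpal's Reed–Muller result quoted as Proposition 1.4, and the term $\sum_{i=0}^t \binom{k}{i}$ is visibly a truncated binomial sum of Reed–Muller type, strongly suggesting the extremal configurations are punctured Reed–Muller codes and the proof should mirror the proof of Theorem 1.1 with the $2^k$ factor there replaced by a truncation once all elements (not just one) must be $k$-loose.
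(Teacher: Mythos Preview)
Your rank-bound argument contains a real, if fixable, error. You try to rule out that an arbitrarily chosen pair $\{e,f\}$ is a $2$-cocircuit by claiming a series pair forces a short circuit through $e$; this is false. A circuit $U_{r,r+1}$ is simple, $k$-paving for every $k\ge 0$, satisfies $r\ge k+4$ for large $r$, and yet every pair of elements is a $2$-cocircuit. The correct move is the one behind Corollary~\ref{rank_bound_paving}: since $M$ is assumed not to be a circuit, not \emph{every} pair is a $2$-cocircuit, so you can choose $e,f$ for which Theorem~\ref{rank_bound_two_loose} applies. The paper simply quotes the binary specialization, Corollary~\ref{binary_rajpal}.

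The size bound is where the substantive gap lies. Your hyperplane-induction sketch runs into exactly the obstacle you name and you do not resolve it. Your alternative route, a direct count in a binary representation, is indeed the paper's approach, but your proposal is missing the structural setup that makes the count go through. The paper takes a circuit $C$ of minimum size $r-k+1$, fixes $e\in C$, and extends $C-e$ to a basis, so that in $[I_r\mid Q]$ the column $e^P$ has exactly $k$ zeroes, placed in the first $k$ coordinates. Columns of $Q-e^P$ are then grouped into classes $F$ by their first $k$ entries. The crux is that each class has at most one column: two columns $a^P,b^P$ in the same $F$ would give $a^P+b^P$ with first $k$ entries zero and (by $k$-pavingness) at most one further zero, so $a^P+b^P+e^P$ has at most one nonzero entry and $M$ would contain a circuit of size at most four, contradicting $r\ge k+4$. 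A separate short computation bounds the number $l$ of zeroes any column of $Q-e^P$ can have among its first $k$ entries and shows $r\le 3k+1-2l$, i.e.\ $l\le t$; hence there are at most $\sum_{i=0}^{l}\binom{k}{i}$ classes $F$, and adding $r+1$ for $I_r$ and $e^P$ gives the bound. None of this scaffolding --- the minimum-circuit basis, the one-column-per-class lemma, or the link $l\le t$ --- appears in your proposal, and the Reed--Muller analogy alone does not supply it.
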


Note that every simple binary matroid with rank at most $k+2$ is $k$-paving, and every simple triangle-free binary matroid of rank $k+3$ is $k$-paving. This includes binary affine matroids of rank $k+3$. To demonstrate the assistance Theorem \ref{size_bound_paving} provides in finding all $k$-paving binary matroids, we apply it to $3$-paving binary matroids and obtain the following result.

\begin{corollary}
\label{3_paving}
Let $M$ be a simple binary $3$-paving matroid of rank at least seven. Then, $r(M) \leq 10$ and $|E(M)| \leq 13$. In particular, the size of $M$ is at most $12, 13, 11,$ and $12$ when the rank of $M$ is $7, 8, 9,$ and $10,$ respectively. 
\end{corollary}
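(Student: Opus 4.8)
The plan is to specialize Theorem~\ref{size_bound_paving} to the case $k=3$ and then read off the numerical consequences. First I would observe that a simple binary $3$-paving matroid $M$ that is not a circuit and has rank $r \geq k+4 = 7$ satisfies the hypotheses of Theorem~\ref{size_bound_paving}, so immediately $r(M) \leq 3k+1 = 10$; this pins the rank to one of $7,8,9,10$ and justifies the phrase ``of rank at least seven'' in the statement. Then for each such $r$ I would compute $t = \lfloor \frac{3k+1-r}{2}\rfloor = \lfloor \frac{10-r}{2}\rfloor$, giving $t = 1$ for $r=7$, $t=1$ for $r=8$, $t=0$ for $r=9$, and $t=0$ for $r=10$. (I should double-check the $r=7$ versus $r=8$ boundary: $\lfloor 3/2\rfloor = 1$ and $\lfloor 2/2\rfloor = 1$, so indeed $t=1$ in both cases, and $\lfloor 1/2 \rfloor = 0$, $\lfloor 0/2\rfloor = 0$ for $r = 9, 10$.)

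Next I would plug these into the size bound $|E(M)| \leq (r+1) + \sum_{i=0}^{t}\binom{k}{i}$ with $k=3$. For $r=7$, $t=1$: $|E(M)| \leq 8 + \binom{3}{0} + \binom{3}{1} = 8 + 1 + 3 = 12$. For $r=8$, $t=1$: $|E(M)| \leq 9 + 1 + 3 = 13$. For $r=9$, $t=0$: $|E(M)| \leq 10 + \binom{3}{0} = 10 + 1 = 11$. For $r=10$, $t=0$: $|E(M)| \leq 11 + 1 = 12$. These are exactly the four values $12, 13, 11, 12$ claimed for ranks $7, 8, 9, 10$ respectively, and the overall maximum over these is $13$, attained at $r=8$, which gives the summary statement $|E(M)| \leq 13$.

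The argument is essentially a routine substitution, so there is no real obstacle; the only care needed is to verify the floor computations at the parity boundaries and to note that the hypotheses of Theorem~\ref{size_bound_paving} (simple, binary, $k$-paving, not a circuit, $r \geq k+4$) are all subsumed by the hypotheses of the corollary once we observe that rank at least seven together with $r(M) \leq 10$ forces $r \in \{7,8,9,10\}$. I would present the proof as a short case analysis on $r$, tabulating $t$ and the resulting bound in each case, and conclude by taking the maximum. One sentence should also remark that the rank bound $r(M)\le 10$ is itself part of the conclusion of Theorem~\ref{size_bound_paving}, so no separate argument is needed for why ranks above $10$ cannot occur.
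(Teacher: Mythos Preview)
Your approach is essentially identical to the paper's own proof: apply Theorem~\ref{size_bound_paving} with $k=3$, note that $r(M)\le 10$, and then for each $r\in\{7,8,9,10\}$ compute $t$ and the resulting size bound. The only minor slip is your claim that ``not a circuit'' is subsumed by the corollary's hypotheses---it is not stated there, but the paper's proof tacitly assumes it as well, so your argument matches the paper exactly.
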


\section{$k$-loose elements in binary matroids}

Theorem \ref{size_bound_one_k_loose} provides a sharp linear bound on the size of a binary matroid containing a $k$-loose element, in terms of its rank. Specifically, if a binary matroid $M$ has a $1$-loose element, then the size of $M$ is at most $2r$. If $M$ has a $2$-loose element, then the size of $M$ does not exceed $4r-4$.

\begin{proof}[Proof of Theorem \ref{size_bound_one_k_loose}]

Let $e$ be a $k$-loose element of $M$. Note that if $k = 0$, then by \cite[Lemma 3.1]{oxjag}, $M$ is a circuit, and the result follows. Therefore $k \geq 1$. Suppose that $e$ is not $(k-1)$-loose. It follows that there is a basis $B = \{e_1, \ldots, e_r\}$ of $M$ such that, in the standard binary representation $P = [I_r|Q]$ of $M$ with respect to $B$, the column $e^P$ labeled by $e$  has its first $k$ entries equal to zero, while all other entries are non-zero. We call the set of indices of the non-zero entries of a column $f^P$ in $Q$ the \textit{support} of $f^P$, denoted as $\text{supp}(f^P)$. The entries of $f^P$, excluding the first $k$ entries, are called the \textit{root entries} of $f^P$, and the set of indices of the non-zero root entries of $f^P$ is the \textit{root support} of $f^P$. Let $Q-e^P$ denote the columns in $Q$ excluding $e^P$.

\begin{sublemma}
\label{zero}
For a column $f^P$ in $Q-e^P$, the sum of the number of zeroes in the first $k$ entries of $f^P$ and the number of non-zero root entries of $f^P$ is at most $k+1$. 
\end{sublemma}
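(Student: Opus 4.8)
The plan is to prove the sublemma by comparing the fundamental circuit of $e$ with that of $f$ and then invoking the fact that $e$ is $k$-loose. Recall that in the representation $P=[I_r\mid Q]$ the column $e^P$ has support exactly $\{k+1,\dots,r\}$, so the fundamental circuit of $e$ with respect to $B$ is the circuit $C_e=\{e,e_{k+1},\dots,e_r\}$ of size $r-k+1$. For a column $f^P$ in $Q-e^P$, let $C_f=\{f\}\cup\{e_i : i\in\operatorname{supp}(f^P)\}$ be the fundamental circuit of $f$; since $M$ has no coloops this is a genuine circuit, and since $e$ is neither $f$ nor a basis element we have $e\in C_e\setminus C_f$, so in particular $C_e\neq C_f$.

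The key steps, in order, are the following. First, pass to the symmetric difference: over $GF(2)$ one has $e^P+f^P=\sum_{i\in\operatorname{supp}(e^P)\triangle\operatorname{supp}(f^P)}u_i$, so the columns indexed by $\{e,f\}\cup\{e_i : i\in\operatorname{supp}(e^P)\triangle\operatorname{supp}(f^P)\}$ sum to zero; equivalently, in a binary matroid $C_e\triangle C_f$ is a (nonempty) disjoint union of circuits. Since $e\in C_e\triangle C_f$, there is a circuit $C'$ with $e\in C'\subseteq C_e\triangle C_f$, whence $|C'|\le|C_e\triangle C_f|$. Second, count $|C_e\triangle C_f|$: writing $z$ for the number of zeroes among the first $k$ entries of $f^P$ and $s$ for the number of nonzero root entries of $f^P$, restricting $\operatorname{supp}(e^P)\triangle\operatorname{supp}(f^P)$ to the coordinates $\{1,\dots,k\}$ (where $e^P$ vanishes) gives exactly the $k-z$ nonzero coordinates of $f^P$ there, while restricting to $\{k+1,\dots,r\}$ (where $e^P$ is everywhere nonzero) gives the complement of the root support of $f^P$, of size $(r-k)-s$. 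Hence $|C_e\triangle C_f|=2+(k-z)+(r-k-s)=r+2-z-s$. Third, apply $k$-looseness of $e$: $|C'|>r-k$, i.e. $|C'|\ge r-k+1$, so $r-k+1\le r+2-z-s$, which rearranges to $z+s\le k+1$, as claimed.

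I do not expect a genuine obstacle here. The only two points needing any care are the appeal to the standard fact that the symmetric difference of two distinct circuits of a binary matroid is a disjoint union of circuits (so that $e$ lies in some circuit contained in $C_e\triangle C_f$), and the bookkeeping separating the coordinates $\{1,\dots,k\}$ from $\{k+1,\dots,r\}$ when computing $|C_e\triangle C_f|$; both are routine given the chosen representation of $e^P$.
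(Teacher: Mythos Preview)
Your argument is correct and is essentially the paper's own proof, just phrased in circuit language rather than directly in linear-algebra terms. The paper simply observes that the number of zeroes in $e^P+f^P$ equals $z+s$, so if $z+s\ge k+2$ then $e^P+f^P$ can be cancelled by at most $r-(k+2)$ identity columns, producing a circuit through $e$ of size at most $r-k$; your computation of $|C_e\triangle C_f|=r+2-z-s$ and the extraction of a circuit through $e$ inside it is the same calculation. (One small remark: the aside that $C_f$ is a ``genuine circuit'' because $M$ has no coloops is unnecessary---a fundamental circuit with respect to a basis is always a circuit.)
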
 

Suppose otherwise. Then $e^P + f^P$ has at least $k+2$ zeroes, so at most $r-(k+2)$ columns from $I_r$ can be added to $e^P + f^P$ to obtain a column with all entries equal to zero. It follows that $e$ is in a circuit of size at most $r-k$, a contradiction. Therefore \ref{zero} holds. 

Let $F$ be a set of all columns in $Q-e^P$ such that, for any pair $f^P, g^P$ in $F$, the sum $f^P + g^P$ has its first $k$ entries equal to zero. It is clear that the first $k$ entries of all columns in $F$ are identical.

\begin{sublemma}
\label{one}
Let $f^P, g^P$ be columns in $F$. Then, the symmetric difference of their supports, $ supp(f^P) \bigtriangleup supp(g^P)$, contains at most two elements.  
\end{sublemma}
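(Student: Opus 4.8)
The plan is to adapt the argument used for \ref{zero}: from $e$, $f$, and $g$ I will build an explicit subset of $E(M)$ whose columns sum to zero over $GF(2)$, small and containing $e$, and then invoke the $k$-looseness of $e$ to bound its size. We may assume $f \neq g$, since otherwise $\text{supp}(f^P) \bigtriangleup \text{supp}(g^P) = \emptyset$. Because $f^P$ and $g^P$ belong to $F$, they have identical first $k$ entries, so $f^P + g^P$ vanishes on its first $k$ coordinates; hence $S := \text{supp}(f^P) \bigtriangleup \text{supp}(g^P) = \text{supp}(f^P + g^P)$ is a subset of $\{k+1, \dots, r\}$. Writing $s = |S|$, the goal is to show $s \leq 2$.

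The key step is a short computation with the representation $P = [I_r|Q]$. By the choice of $B$, the column $e^P$ vanishes on the first $k$ coordinates and equals $1$ on the remaining $r-k$ coordinates, so $e^P = \sum_{i = k+1}^{r} e_i^P$, while $f^P + g^P = \sum_{i \in S} e_i^P$. Adding these two identities gives $e^P + f^P + g^P = \sum_{i \in T} e_i^P$, where $T = \{k+1, \dots, r\} \setminus S$, so $|T| = (r - k) - s$. Consequently the columns indexed by $D := \{e, f, g\} \cup \{e_i : i \in T\}$ sum to zero over $GF(2)$. Since $M$ is simple, no column of $Q$ is a unit vector, so $e$, $f$, and $g$ are distinct from all the basis elements $e_i$ with $i \in T$; hence $|D| = 3 + (r - k - s)$. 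As $M$ is binary, any set of elements whose columns sum to zero is a disjoint union of circuits of $M$, so $D$ is such a union and exactly one of those circuits, say $C$, contains $e$. Then $|C| \leq |D| = 3 + r - k - s$, whereas the $k$-looseness of $e$ forces $|C| \geq r - k + 1$; comparing the two bounds yields $s \leq 2$.

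The point requiring care is the passage from ``the columns of $D$ sum to zero'' to ``$e$ lies in a circuit of size at most $|D|$'': mere dependence of $D$ would not suffice, since $e$ need not lie in any circuit contained in $D$. It is precisely the binary structure---that $D$ is a disjoint union of circuits---which produces a small circuit through $e$. The one degenerate case, $s = r - k$ (so $T = \emptyset$ and $\{e, f, g\}$ would be a triangle through $e$), is not a genuine obstruction: since $r - k \geq 4$ under the hypotheses of Theorem~\ref{size_bound_one_k_loose}, it is excluded by the same size comparison, and the remaining steps are routine.
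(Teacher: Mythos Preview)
Your argument is correct and is essentially the same as the paper's: both compute that $e^P+f^P+g^P$ has at least $k+s$ zero entries (equivalently, at most $r-k-s$ nonzero entries), add the corresponding columns of $I_r$ to produce a zero sum, and conclude that $e$ lies in a circuit of size at most $3+(r-k-s)$, forcing $s\le 2$. Your write-up is a direct version of the paper's proof-by-contradiction, and you are a bit more explicit than the paper in justifying why the resulting dependent set yields a circuit through $e$ (via the binary fact that a set of columns summing to zero is a disjoint union of circuits).
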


Suppose not. Then $f^P + g^P$ has its first $k$ entries equal to zero and contains at least three non-zero entries. It follows that $e^P + f^P + g^P$ has at least $k+3$ zeroes so it can be combined linearly over GF(2) with at most $r-(k+3)$ columns in $I_r$ to obtain the column that has all the entries zero. Therefore $e$ is in a circuit of size at most $r-k$, a contradiction. Thus \ref{one} holds.

\begin{sublemma}
\label{two}
$|F| \leq r-k+1$. Moreover, if the columns in $F$ have exactly one non-zero entry in the first $k$ entries, then $|F| \leq r-k$.
\end{sublemma}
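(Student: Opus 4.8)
The plan is to translate the statement into a purely combinatorial question about set systems and then settle that question by a short case analysis. Recall that all columns of $F$ have the same first $k$ entries; call this common head $h$, and assume $F\ne\emptyset$. Since $M$ is simple, the columns of $F$ are pairwise distinct and non-parallel, and since they agree on the first $k$ coordinates their root supports are pairwise distinct subsets of the set $\{k+1,\dots,r\}$, which has $n:=r-k\ge 4$ elements by the hypotheses of the theorem. By Sublemma \ref{one} any two columns of $F$ have supports with symmetric difference of size at most $2$; as their heads coincide, the same bound holds for the symmetric differences of their root supports. So it suffices to prove the combinatorial fact that a family $\mathcal F$ of distinct subsets of an $n$-element set ($n\ge 3$) with all pairwise symmetric differences of size at most $2$ has at most $n+1$ members, and that this improves to at most $n$ members once every member of $\mathcal F$ has size at most $2$ and is non-empty.

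For the general bound I would first normalise: replacing each $S\in\mathcal F$ by $S\bigtriangleup S_0$ for a fixed $S_0\in\mathcal F$ is a bijection of the power set that preserves all pairwise symmetric differences and the size of $\mathcal F$, so we may assume $\emptyset\in\mathcal F$; then every member of $\mathcal F$ is a singleton or a $2$-element set. Two elementary observations drive the count: (i) a singleton $\{a\}$ and a $2$-set $P$ lie in $\mathcal F$ together only if $a\in P$, since otherwise $|\{a\}\bigtriangleup P|=3$; and (ii) two distinct $2$-sets in $\mathcal F$ meet in exactly one point, since disjoint $2$-sets have symmetric difference of size $4$. From (i) the set $U$ of elements occurring as a singleton of $\mathcal F$ lies inside every $2$-set of $\mathcal F$, so $|U|\le 2$; from (ii) the $2$-sets of $\mathcal F$ form a pairwise-intersecting family, so they either all pass through one common point --- giving at most $n-1$ of them --- or form a triangle $\{a,b\},\{b,c\},\{a,c\}$ --- exactly $3$ of them. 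A check of the cases $|U|=0,1,2$ (using $n\ge 3$) then gives $|\mathcal F|\le n+1$ in each case, so $|F|\le r-k+1$.

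For the ``moreover'' clause, assume the columns of $F$ have exactly one non-zero entry among their first $k$ entries. Then Sublemma \ref{zero} forces every root support to have size at most $2$, and simplicity of $M$ rules out an empty root support, since such a column would be a unit vector parallel to a column of $I_r$. Hence $\mathcal F$ already consists of genuine singletons and $2$-sets with $\emptyset\notin\mathcal F$, so there is no need for the normalisation step; rerunning the case analysis without the member $\emptyset$ lowers each bound by one and yields $|\mathcal F|\le n$, i.e.\ $|F|\le r-k$. The step I expect to be the main obstacle is the combinatorial lemma itself --- in particular pinning down the star-versus-triangle structure of pairwise-intersecting $2$-sets and the containment condition (i), and bookkeeping which members are singletons and which are $2$-sets in the case analysis; reducing to this lemma is a routine use of Sublemmas \ref{zero} and \ref{one} together with the simplicity of $M$.
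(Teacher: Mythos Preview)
Your argument is correct, and it takes a genuinely different route from the paper's. The paper fixes a column $f^P\in F$ with root support $S$ of maximum size $s$, classifies the remaining columns of $F$ into three types according to how their support sits relative to $S$, and then bounds each type separately; in doing so it repeatedly invokes the global hypothesis $k\le (r-2)/3$ (for instance to compare $s$ with $r-s-k$ and to check $s+3\le r-k+1$). You instead abstract the situation to a clean extremal statement about set systems: any family of distinct subsets of an $n$-set with all pairwise symmetric differences of size at most~$2$ has at most $n+1$ members. Your symmetric-difference normalisation $S\mapsto S\bigtriangleup S_0$ is the key simplification---it reduces everything to singletons and $2$-sets, after which the star-versus-triangle dichotomy for pairwise-intersecting $2$-sets finishes the count. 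This is both shorter and sharper: your lemma only needs $n=r-k\ge 3$, whereas the paper's type-by-type bookkeeping leans on the stronger assumption $k\le (r-2)/3$. The paper's approach, on the other hand, keeps the original column $f^P$ visible throughout, which makes it more transparent how the structure of $F$ relates back to $e^P$ and to Sublemma~\ref{zero}; your abstraction hides this but pays off in brevity.

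One small remark on presentation: in your case analysis you assert $|U|\le 2$ before splitting into $|U|=0,1,2$, but that bound only holds when at least one $2$-set is present; when there are none, $U$ can have up to $n$ elements and $|\mathcal F|\le 1+n$ (respectively $\le n$ in the ``moreover'' clause) follows directly. You clearly have this in mind, but it would be worth making the ``no $2$-sets'' sub-case explicit.
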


Choose a column $f^P$ in $F$ such that the root support of $f^P$, say $S$, has the largest possible size. Suppose $|S| = s$. It follows by \ref{zero} that $s \leq k+1$. In view of \ref{one}, a column $g^P$ in $F-f^P$ can fall into one of the following three categories, namely, 

\begin{enumerate}[label=(\roman*)]

    \item $\text{supp}(g^P) \subseteq \text{supp}(f^P)$ with $|\text{supp}(f^P) - \text{supp}(g^P)| = 1$, 
    \item $\text{supp}(g^P) \subseteq \text{supp}(f^P)$ with $|\text{supp}(f^P) - \text{supp}(g^P)| = 2$, or 
    \item $\text{supp}(g^P) \not \subseteq \text{supp}(f^P)$. 
\end{enumerate}

We call these columns type A, type B, and type C, respectively. 
By \ref{one}, it follows that the number of type B columns is at most $ \text{max}\{3, s-1\}$. Note that if $s \leq 1$, there is no type B column. If $s=2$, there is at most one type B column. It follows that the number of type B columns does not exceed $s$.

First, suppose that $F$ contains a type C column. We say an entry of a column $g^P$ is in $S$ if the index of the entry \textit{is in $S$}. Similarly, an entry of $g^P$ is \textit{outside $S$} if its index is not in $S$. Since the size of the root support of any column in $F$ is at most $s$, it follows from \ref{one} that a type C column has precisely one non-zero root entry outside $S$ and exactly one zero entry in $S$.  Observe that if $F$ contains two type C columns such that their corresponding entries in $S$ are not identical, then by \ref{one}, all their corresponding entries outside $S$ must be identical. In this case, the number of type C columns is at most $s$. On the other hand, if all type C columns in $F$ have identical entries in $S$, then the number of type C columns is at most $r-(s+k)$. Therefore, the number of columns of type C in $F$ is at most $\text{max}\{s, r-(s+k)\}$. Given that $s \leq k+1$ and $k \leq \frac{r-2}{3}$, it follows that the number of type C columns in $F$ is at most $r-(s+k)$. Since $F$ contains a type C column, it is immediate from \ref{one} that there is at most one type A column in $F$. Furthermore, if a type A column exists, then we have at most $s-1$ type B columns. It follows that the sum of the number of type A columns and the number of type B columns does not exceed $s$. Therefore the size of $F$ is at most $r-(s+k) + s + 1 = r-k+1$.

Now, suppose that no type C column exists. By \ref{one}, it follows that if a type B column exists, then the number of type A columns is at most $2$ so $F$ has size at most $s+3$. Since $s \leq k+1, r \geq 5$, and $k \leq \frac{r-2}{3}$, it follows that $s+3 \leq r-k+1$. Finally note that the number of type A columns is at most $s$ so $|F| \leq r-k+1$.

If the columns in $F$ have exactly one non-zero entry in the first $k$ entries, then by \ref{zero}, $s$ is at most two. If $s=1$, then all the columns in $F-f^P$ are type C; if a column is of type A, then it is parallel to some element in $I_r$, a contradiction since $M$ is simple. Since the number of type C columns is at most $r-(s+k)$, it follows that $|F|$ is at most $r-k$. Now suppose that $s=2$. Note that all columns in $F-f^P$ are either type A or type C; a type B column contradicts that $M$ is simple. It is clear that the number of type A columns is at most two and the number of type C columns is $r-(2+k)$. However, if $F$ contains a type C column, then, by \ref{one}, the number of type A columns is at most one. Therefore $|F|$ is at most $\text{max}\{3,r-k\}$. Since $k \leq \frac{r-2}{3}$, it follows that $|F| \leq r-k$. Thus \ref{two} holds.

Observe that if there is a column $h^P$ in $Q-e^P$ that has its first $k$ entries equal to zero, then by \ref{zero}, the column $h^P$ has at most one non-zero root entry so $h^P$ has at most one non-zero entry. Since $M$ is simple, this is a contradiction. It follows that there are at most $2^k-1$ sets $F$ in $Q-e^P$, and $k$ of these have exactly one non-zero entry in the first $k$ entries. Therefore, by \ref{two}, $|E(M)| \leq k(r-k) + (2^k-1-k)(r-k+1)+ (r+1) = 2^k(r-k+1)$. 

If $e$ is $(k-1)$-loose, we choose the smallest non-negative integer $t$ such that $e$ is $t$-loose. By the same argument we obtain $|E(M)| \leq 2^t(r-t+1)$.  Since $t \leq k-1, r \geq 5$, and $k \leq \frac{r-2}{3}$, it follows that  $2^t(r-t+1) \leq 2^k(r-k+1)$.
\end{proof}

The following proposition demonstrates that the bound in Theorem \ref{size_bound_one_k_loose} is sharp.

\begin{proposition}
\label{sharp}
For $r \geq 2$ and $0 \leq k \leq r-2$, there exists a simple binary matroid of rank $r$ with a $k$-loose element, whose size is $2^k(r-k+1)$. 
\end{proposition}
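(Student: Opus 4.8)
The plan is to construct the extremal matroid explicitly by reverse-engineering the equality case in the proof of Theorem \ref{size_bound_one_k_loose}. There the bound $|E(M)| \le 2^k(r-k+1)$ is attained by taking $e$ to be a $k$-loose but not $(k-1)$-loose element represented (with respect to a basis $B$) by a column $e^P$ whose first $k$ entries are zero and whose remaining $r-k$ entries are all $1$, and by filling $Q - e^P$ so that there are exactly $2^k - 1$ nonempty ``blocks'' $F$ indexed by the nonzero patterns on the first $k$ coordinates, where each of the $k$ weight-one patterns contributes a block of size $r-k$ and each of the remaining $2^k - 1 - k$ patterns contributes a block of size $r-k+1$. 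So I would write down a concrete standard representation $P = [\,I_r \mid Q\,]$ realizing exactly this, and then verify two things: that the resulting matroid is simple, and that $e$ is $k$-loose in it.

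Concretely, index the rows $1,\dots,r$, reserve rows $1,\dots,k$ as the ``prefix'' rows and rows $k+1,\dots,r$ as the ``root'' rows. For each nonzero vector $v \in GF(2)^k$ let the block $F_v$ consist of columns whose first $k$ entries equal $v$; on the root rows I would use columns of Hamming weight $r-k-1$ or $r-k$ (i.e. the all-ones root vector and the $r-k$ vectors obtained from it by deleting a single $1$). For weight-one $v$ (there are $k$ of these) take the $r-k$ root vectors of weight $r-k-1$; for all other nonzero $v$ (there are $2^k - 1 - k$ of these) take those same $r-k$ plus the all-ones root vector, giving $r-k+1$ columns. Adding the column $e^P$ and the $r$ unit columns of $I_r$, the total is $r + 1 + k(r-k) + (2^k-1-k)(r-k+1) = 2^k(r-k+1)$, matching the target. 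The first step is thus just bookkeeping to confirm this count and to handle the mild edge cases (small $k$, or $k = r-2$) separately if the generic construction degenerates.

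The substantive step is verifying that $e$ is $k$-loose, i.e. that every circuit through $e$ has size $> r-k$, equivalently that no GF(2)-linear combination of $e^P$ with at most $r-k-1$ other columns of $P$ is zero, equivalently that $e^P$ is not the sum of fewer than $r-k$ columns of $P - e^P$. Since $e^P$ is $1$ on all $r-k$ root rows, any expression of $e^P$ as a sum of non-unit ($Q$-)columns plus unit columns must, on the root rows, produce the all-ones vector; each $Q$-column in $Q-e^P$ contributes a root vector of weight $r-k-1$ or $r-k$, and each prefix unit column contributes $0$ on the root rows while each root unit column contributes a weight-one vector there. A short parity/weight argument on the $r-k$ root coordinates -- the root vectors available have weight $\ge r-k-1$, so producing a weight-$(r-k)$ target from a sum of few of them forces either the single all-ones root column (but that column alone has a nonzero prefix unless... ) together with prefix-cancelling partners, or many columns -- shows any such combination uses at least $r-k$ columns besides $e^P$. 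The main obstacle is making this root-coordinate counting fully rigorous while simultaneously tracking the first $k$ coordinates, since the nonzero prefix of $e^P$ being zero means the other columns used must cancel in pairs (or singly with a prefix unit column) on the prefix rows; I expect the cleanest route is to observe that on the root rows the complements of the chosen root vectors have weight $0$ or $1$, so a vanishing combination including $e^P$ corresponds to a cover of the $r-k$ root coordinates by these tiny complement-sets, forcing at least $\lceil (r-k)/1\rceil$-ish many columns, and then to check the low-weight exceptions by hand.

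Finally I would record simplicity: no column of $Q-e^P$ is zero (each has weight $\ge r-k-1 \ge 2$ on the root rows, using $k \le r-2$ hence $r-k \ge 2$... for $r-k-1 \ge 1$ need $r-k\ge 2$, and weight $\ge 1$ suffices together with possibly-nonzero prefix to be nonzero and non-unit), no two columns are equal (blocks $F_v$ are separated by their distinct prefixes $v$, and within a block the root vectors are pairwise distinct), and $e^P$ itself is neither zero nor a unit nor equal to any other column (its prefix is $0$ but its root part is all-ones, of weight $r-k \ge 2$). This gives a simple binary matroid of rank $r$ with a $k$-loose element of size exactly $2^k(r-k+1)$, completing the proof. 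Throughout one should double-check the degenerate ranges $k=0$ (the matroid is a circuit $U_{r-1,r}$, size $r = 2^0(r-0+1)$? -- no, $r+1$, consistent) and $k=r-2$, adjusting the construction or citing the relevant small cases as needed.
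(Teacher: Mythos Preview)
Your construction has the root vectors backwards, and this breaks the $k$-loose property. You place in each block $F_v$ columns whose root part has weight $r-k-1$ or $r-k$; the paper (and the equality analysis in Theorem~\ref{size_bound_one_k_loose}) instead uses root vectors of weight $0$ or $1$, namely the columns of $I_{r-k}$ together with the zero root vector for $|v|\ge 2$. Sublemma~\ref{zero} in the proof of Theorem~\ref{size_bound_one_k_loose} says precisely that a column of $Q-e^P$ can have at most $k+1$ non-zero root entries, so high-weight root vectors are exactly what one must \emph{avoid}.

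Concretely, in your matroid take any $v$ with $|v|=2$ (this exists once $k\ge 2$) and let $f$ be the column with prefix $v$ and all-ones root. Then $e+f$ is supported only on the two prefix coordinates of $v$, so $\{e,f,e_i,e_j\}$ is a circuit of size $4$, where $e_i,e_j$ are the two corresponding unit columns. For $r\ge k+4$ this circuit has size $\le r-k$, so $e$ is not $k$-loose. The same failure occurs already at $k=1$: for $v=(1)$ and $f$ with root vector missing coordinate $j$, the set $\{e,f,e_1,e_j\}$ is again a $4$-circuit. Your sketched ``parity/weight'' argument cannot be completed because it is simply false that at least $r-k$ columns are needed.

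The fix is to complement your root vectors: use unit root vectors (and, for $|v|\ge 2$, also the zero root vector). Then every column of $P$ other than $e^P$ has root weight $0$ or $1$, and since $e^P$ has root weight $r-k$, any $GF(2)$-combination summing to $e^P$ must hit each of the $r-k$ root coordinates an odd number of times, forcing at least $r-k$ summands. That is the paper's one-line verification that $e$ is $k$-loose.
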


\begin{proof}

Let $M$ be a binary matroid represented by the matrix $P = [I_r|Q]$ over $GF(2)$ such that $Q$ contains a column $e$ whose first $k$ entries are zero while the remaining entries are non-zero. We now describe the other columns of $Q$, demonstrate that $M$ has size $2^k(r-k+1)$, and show that the element corresponding to the column $e$ is $k$-loose in $M$. 
 
For a column $f$ in $Q$, we refer to the column obtained by removing the first $k$ entries of $f$ as the \textit{root vector} of $f$. The columns in $Q-e$ can be partitioned into disjoint sets $F$, where every column in $F$ has identical first $k$ entries. No column in $Q-e$ has all of its first $k$ entries equal to zero, so we have $2^k-1$ such sets $F$. 

For the sets $F$ where each column in $F$ has exactly one non-zero entry in its first $k$ entries, the root vectors of the columns in $F$ are the columns of the identity matrix of size $r-k$, so each such set $F$ has size $r-k$. For the remaining $2^k-(k+1)$ sets $F$, the root vectors of the columns in $F$ are the columns of the identity matrix of size $r-k$, along with the zero column, which has all of its entries equal to zero. The size of each of these sets $F$ is $r-k+1$. It follows that the size of $M$ is $2^k(r-k+1)$. 

Note that there is no zero column in $P$. Moreover, since $r \geq 2$ and $k \leq r-2$, all of the columns in $P$ are distinct, so $M$ is simple. Observe that if we add fewer than $r-k$ columns to $e$, then we cannot obtain the zero column so all circuits containing $e$ in $M$ have size greater than $r-k$. Therefore $e$ is $k$-loose in $M$. 
\end{proof}

\section{$k$-loose elements in $GF(q)$-matroids}

Zaslavsky and the author in \cite{sinzas} showed that a ternary matroid with no coloops that contains a $1$-loose element has linear size relative to its rank. In particular they proved the following.

\begin{theorem}
\label{ternary_one_aloose_intro}
For $r \geq 5$, let $M$ be a simple ternary matroid of rank $r$ containing a loose element and with no coloops. Then $|E(M)| \leq \lfloor\frac{41r-101}{2} \rfloor$ if $r > 10$. If $r \leq 10$, then $|E(M)| \leq \lfloor\frac{35r-35}{2} \rfloor$.
\end{theorem}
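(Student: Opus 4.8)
The plan is to adapt the counting argument from the proof of Theorem~\ref{size_bound_one_k_loose} to $GF(3)$ with $k=1$, keeping track of the extra column types that the ternary field permits. Fix a loose element $e$ of $M$. If $e$ is $0$-loose, then by \cite[Lemma~3.1]{oxjag} $M$ is a circuit and $|E(M)|=r+1$, which lies well below the claimed bound for $r\ge 5$; so we may assume $e$ is not $0$-loose. Then there is a basis $\{e_1,\dots,e_r\}$ for which the standard ternary representation $P=[I_r\mid Q]$ has $e^P=(0,x_2,\dots,x_r)^{T}$ with each $x_i\in\{1,2\}$, and we write $x=(x_2,\dots,x_r)$ for the root vector of $e^P$.

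The engine of the argument is the ternary analogue of sublemmas~\ref{zero} and~\ref{one}, which I will state once in the following sharp form: for every $j\ge 0$, every choice of $j$ distinct columns $f_1^P,\dots,f_j^P$ of $Q-e^P$, and all nonzero $\beta_1,\dots,\beta_j\in GF(3)$, the vector $e^P+\sum_{i=1}^{j}\beta_i f_i^P$ has at most $j+1$ zero entries; otherwise $e^P$ lies in the span of at most $r-2$ other columns and $e$ is in a circuit of size at most $r-1$. For $j=1$ this says that, since the three elements of $GF(3)$ at a root coordinate $i$ are $0$, $x_i$ and $-x_i$, a column $f^P$ has at most one root coordinate equal to $x_i$ and at most one equal to $-x_i$ when $f_1^P=0$, and at most two of each when $f_1^P\ne 0$; in particular the root support of $f^P$ has size at most $2$ or at most $4$ accordingly.

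I would then partition $Q-e^P$ according to the first coordinate $f_1^P\in\{0,1,2\}$ and, inside each class, according to the sets of root coordinates on which $f^P$ agrees with $x$, respectively with $-x$. Repeating the structure of the argument for~\ref{two}, I would select in each class a column whose root support is largest, sort the remaining columns of the class into the ternary analogues of the types~A, B, C of that proof, and bound the number of columns of each type using the $j=1$ and $j=2$ instances of the inequality above (the $j=2$ instance now couples a pair $f^P,g^P$ across all coefficient patterns $(\beta_1,\beta_2)\in\{1,2\}^2$, which is the genuinely new ingredient). Summing the per-class bounds yields a bound linear in $r$, and a careful optimisation of the resulting sum is what produces the leading coefficient $\tfrac{41}{2}$, together with matching ternary representations showing sharpness.

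The main obstacle is the combinatorial bookkeeping in this last step. Over $GF(3)$ the number of admissible column classes is much larger than over $GF(2)$, the extremal configuration is not unique, and, crucially, a different family of classes saturates the bound for small $r$ than for large $r$, which is precisely why the statement splits at $r=10$ with slopes $\tfrac{35}{2}$ and $\tfrac{41}{2}$. Making this precise requires enumerating the maximal admissible classes together with their sizes as affine functions of $r$, deciding which collections of classes can coexist (a finite compatibility check that occasionally needs the $j=3$ instance of the inequality), maximising over the admissible collections in each of the two rank regimes, and checking the small ranks $5\le r\le 10$ directly; this dovetails with, and can alternatively be read off from, the structural classification of ternary matroids with a loose element established in \cite{sinzas}.
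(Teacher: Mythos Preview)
The paper does not prove Theorem~\ref{ternary_one_aloose_intro}: it is quoted, without proof, as a result of Singh and Zaslavsky~\cite{sinzas}. So there is no ``paper's own proof'' to compare your proposal to; the correct attribution is simply to cite~\cite{sinzas}.

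As for your proposal itself, it is a plausible \emph{plan} rather than a proof. The $j$-column inequality you state is fine, and partitioning $Q-e^P$ by the first coordinate and then by the agreement pattern with $x$ and $-x$ is the natural ternary analogue of the binary argument. But everything after ``Repeating the structure of the argument for~\ref{two}'' is programmatic: you do not actually define the ternary type classes, you do not bound their sizes, you do not carry out the compatibility analysis that decides which classes can coexist, and you do not perform the optimisation that produces the constants $\tfrac{41}{2}$ and $\tfrac{35}{2}$ or the cutoff at $r=10$. You explicitly flag this as ``the main obstacle'' and then defer to the classification in~\cite{sinzas}. That is circular if the goal is to supply an independent proof, and in any case it means the hard part of the argument---the part that distinguishes the ternary bound from a generic linear bound---is absent. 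If you want this to stand as a proof you must actually execute the enumeration and optimisation; otherwise the honest move is to cite~\cite{sinzas}, which is exactly what the paper does.
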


Furthermore, they proved that if a cosimple ternary matroid contains two $1$-loose elements, then the rank of the matroid is bounded. This result was shown to hold for all $GF(q)$-matroids. 

\begin{theorem}
\label{two_almost_loose}
Let $M$ be a simple $GF(q)$-matroid with no coloops that has two $1$-loose elements $e$ and $f$. Then $r(M) \leq 2q$, or $\{e,f\}$ is a cocircuit of $M$.
\end{theorem}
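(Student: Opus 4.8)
The plan is to split on the value of $r(E(M)\setminus\{e,f\})$ and, in the main case, to exploit the $GF(q)$-representation through a single pivot. Since deleting two elements lowers the rank by at most two, $r(E(M)\setminus\{e,f\})$ is $r-2$, $r-1$, or $r$, where $r=r(M)$. First I would dispose of the two smaller values. If it is $r-2$, then adding $e$ back gives $r(E(M)\setminus f)\le r-1<r$, so $f\notin\mathrm{cl}(E(M)\setminus f)$ and $f$ is a coloop, contrary to hypothesis. If it is $r-1$, then $H=\mathrm{cl}(E(M)\setminus\{e,f\})$ is a hyperplane with $E(M)\setminus H\subseteq\{e,f\}$, and $E(M)\setminus H$ cannot be $\{e\}$ or $\{f\}$ (that would make $e$ or $f$ a coloop), so $E(M)\setminus H=\{e,f\}$ and $\{e,f\}$ is a cocircuit, which is the second alternative. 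Hence I may assume $E(M)\setminus\{e,f\}$ spans $M$.

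In that case I would fix a basis $B=\{b_1,\dots,b_r\}\subseteq E(M)\setminus\{e,f\}$ and a representation $[I_r\mid Q]$ of $M$ over $GF(q)$ with respect to $B$, and write $e=\sum_{i\in A}\lambda_i b_i$ and $f=\sum_{i\in A'}\mu_i b_i$ with all $\lambda_i,\mu_i$ nonzero, so that $A,A'$ are the supports of the columns of $e,f$ in $Q$. The fundamental circuit $C(e,B)$ has size $1+|A|$; since $e$ is $1$-loose this is at least $r$, so $|A|\ge r-1$, and likewise $|A'|\ge r-1$. Writing $Z=\{1,\dots,r\}\setminus(A\cup A')$, we get $|Z|\le 1$. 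The crux is the following pivot: for each $j\in A\cap A'$, the set $B'=(B\setminus\{b_j\})\cup\{f\}$ is a basis (as $\mu_j\ne 0$) avoiding $e$, and re-expressing $e$ over $B'$ via $b_j=\mu_j^{-1}(f-\sum_{i\in A'\setminus\{j\}}\mu_i b_i)$ shows the coefficient of $f$ is $\lambda_j\mu_j^{-1}\ne 0$ while, for $i\ne j$, the coefficient of $b_i$ vanishes exactly when $i\in Z$ or ($i\in A\cap A'$ with $\lambda_i\mu_i^{-1}=\lambda_j\mu_j^{-1}$). Since $e$ is $1$-loose, $|C(e,B')|\ge r$, so at most one index $i\ne j$ gives a vanishing coefficient; thus $|Z|+|\{\,i\in A\cap A':i\ne j,\ \lambda_i\mu_i^{-1}=\lambda_j\mu_j^{-1}\,\}|\le 1$.

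Running this over all $j$, the map $i\mapsto\lambda_i\mu_i^{-1}$ from $A\cap A'$ into the $q-1$ nonzero elements of $GF(q)$ has every fibre of size at most $2-|Z|$, so $|A\cap A'|\le(q-1)(2-|Z|)$. Combining with $|A\cap A'|=|A|+|A'|-|A\cup A'|\ge 2(r-1)-(r-|Z|)=r-2+|Z|$, I get $r-2+|Z|\le(q-1)(2-|Z|)$, which gives $r\le 2q$ when $|Z|=0$ and $r\le q$ when $|Z|=1$; in all cases $r(M)\le 2q$, as desired. I expect the main obstacle to be the pivot step: carefully tracking how the zero pattern of the column of $e$ transforms when $b_j$ is exchanged for $f$, and isolating from it the clean arithmetic condition on the ratios $\lambda_i/\mu_i$, while checking that the borderline situations ($|Z|=1$, $A\cap A'$ small, or small $r$) are consistent with the count — in each of those the inequality $r(M)\le 2q$ is in fact immediate.
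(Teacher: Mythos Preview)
Your proof is correct and follows essentially the same line as the paper's (which actually proves the general $k$-loose version, Theorem~\ref{rank_bound_two_loose}, and obtains this statement as the case $k=1$): choose a basis avoiding $\{e,f\}$, note that the columns of $e$ and $f$ each have at most one zero, and then bound how often a ratio $\lambda_i/\mu_i$ can repeat by producing a short circuit through $e$ and $f$ whenever it repeats too much. The only cosmetic difference is that the paper row-scales $e$'s column to all ones and counts equal entries of $f$'s column directly, whereas you realize the same linear combination via an explicit basis exchange; the resulting fibre bound $|A\cap A'|\le (q-1)(2-|Z|)$ and the final arithmetic are identical.
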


We extend Theorem \ref{two_almost_loose} to $k$-loose elements and prove Theorem \ref{rank_bound_two_loose}. The following is the restatement of Theorem \ref{rank_bound_two_loose}.

\begin{theorem*}
Let $M$ be a simple $GF(q)$-matroid with no coloops that has two $k$-loose elements $e$ and $f$. Then $r(M) \leq (q+1)(k-1)+2q$, or $\{e,f\}$ is a cocircuit of $M$.
\end{theorem*}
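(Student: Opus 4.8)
The plan is to argue by induction on $k$, with the base case $k=1$ being exactly Theorem \ref{two_almost_loose} (note $(q+1)(1-1)+2q = 2q$). So assume $k \geq 2$, that the statement holds for $k-1$, and that $\{e,f\}$ is not a cocircuit of $M$; we must bound $r = r(M)$.

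First I would record some preliminaries. Since $M$ is simple with no coloops and $\{e,f\}$ is not a cocircuit, the set $E(M)\setminus\{e,f\}$ spans $M$: otherwise its closure is a hyperplane containing $E(M)\setminus\{e,f\}$, which forces one of $\{e,f\}$, $\{e\}$, $\{f\}$ to be a cocircuit, a contradiction. Fix a basis $B \subseteq E(M)\setminus\{e,f\}$ and let $C_e = C(e,B)$ and $C_f = C(f,B)$ be the fundamental circuits; since $e$ and $f$ are $k$-loose, $|C_e|,|C_f| \geq r-k+1$, so $|C_e \cap B|,|C_f\cap B| \geq r-k$, and note that $C_e \cap C_f \subseteq B$. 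I would also record that contracting a non-coloop preserves "no coloops", and that, once $r \geq k+3$, contracting a single element cannot create a loop at $e$ or $f$ or a triangle through $e$ or $f$ (such a configuration gives a circuit of size at most $3$ through $e$ or $f$, contradicting $k$-looseness), so $\mathrm{si}(M/g)$ still contains $e$ and $f$ as distinct points.

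The easy case is when some basis $B$ of the above type has $C_e \cap C_f = \emptyset$: then $C_e\cap B$ and $C_f\cap B$ are disjoint subsets of $B$, so $2(r-k) \leq |C_e\cap B| + |C_f\cap B| \leq |B| = r$, giving $r \leq 2k \leq (q+1)(k-1)+2q$. Otherwise I would try to drop $k$ by one: if there is an element $g \in E(M)\setminus\{e,f\}$ lying in no minimum-size circuit through $e$ and in no minimum-size circuit through $f$ (drop the condition for $e$, resp.\ $f$, if it is already $(k-1)$-loose), and such that $\{e,f\}$ is still not a cocircuit of $N := \mathrm{si}(M/g)$, then every circuit through $e$ or $f$ in $N$ has size at least $r-k+1 = r(N)-(k-1)+1$; hence $e$ and $f$ are $(k-1)$-loose in the simple, coloop-free $GF(q)$-matroid $N$ of rank $r-1$, and the inductive hypothesis yields $r-1 \leq (q+1)(k-2)+2q$, so $r \leq (q+1)(k-1)+q \leq (q+1)(k-1)+2q$.

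The main obstacle is the remaining case, in which neither disjointness holds for any such basis nor a usable $g$ exists; then $E(M)\setminus\{e,f\}$ is covered by the minimum-size circuits through $e$ together with those through $f$, together with the further restrictive subcase of those $g$ whose contraction would make $\{e,f\}$ a cocircuit (which pins down $\mathrm{cl}(E(M)\setminus\{e,f,g\})$). Here I would mimic the $GF(q)$-representation argument behind Theorem \ref{two_almost_loose}: a minimum-size circuit through $e$ spans a flat of rank $r-k$ through $e$, any two distinct rank-$(r-k)$ flats meet in a flat of rank at least $r-2k$ by submodularity, and the number of hyperplanes through a coline of a $GF(q)$-matroid is at most $q+1$ (since a rank-$2$ $GF(q)$-matroid has at most $q+1$ points); feeding these "pencil" bounds for $e$ and for $f$ into $|C_e\cap B| + |C_f\cap B| - |C_e\cap C_f| \leq r$ should force $r \leq (q+1)(k-1)+2q$. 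This is exactly where the factor $q+1$ enters, and I expect this counting to be the technical heart of the proof; the inductive reduction above is routine. Finally, one checks directly the finitely many small-rank configurations excluded when "$r \geq k+3$" was used; there $r$ is bounded by an absolute constant, comfortably within the claimed bound.
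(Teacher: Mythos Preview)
Your proposal has a genuine gap: the ``main obstacle'' case is precisely where the theorem lives, and you have not proved it. You write that feeding ``pencil bounds'' into $|C_e\cap B|+|C_f\cap B|-|C_e\cap C_f|\le r$ ``should force'' $r\le (q+1)(k-1)+2q$, but that inequality only yields $|C_e\cap C_f|\ge r-2k$, a lower bound on the intersection, and you give no mechanism by which the $q+1$ hyperplanes through a coline translate into the exact constant $(q+1)(k-1)+2q$. Since the inductive step only fires when a suitable $g$ exists, and you concede that the residual case is the ``technical heart'', the induction is doing no real work: you still owe a direct bound in the hard case, which is the whole theorem.

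By contrast, the paper's proof is a short direct argument with no induction and no case split. Pick a basis $B$ disjoint from $\{e,f\}$ (possible exactly because $\{e,f\}$ is not a cocircuit), write $P=[I_r\,|\,Q]$, and scale rows so that the column $e^P$ has all ones outside its (at most $k$) zero positions. Now look at $f^P$ restricted to the rows where $e^P$ equals $1$. If some nonzero field value occurs $k+2$ times there, a suitable $GF(q)$-combination of $e^P$, $f^P$, and at most $r-(k+2)$ identity columns vanishes, giving a circuit of size at most $r-k$ through $e$, a contradiction. Hence each of the $q-1$ nonzero values occurs at most $k+1$ times in those rows, and $f^P$ has at most $k$ zeros overall; adding the $k$ top rows gives $r\le (k+1)(q-1)+k+k=(q+1)(k-1)+2q$. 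This pigeonhole on field values is the missing idea in your sketch, and it replaces the entire inductive scaffolding.
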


\begin{proof}
We may assume that $\{e,f\}$ is not a cocircuit of $M$. Therefore, there is a basis $B$ of $M$ that is disjoint from $\{e,f\}$. Note that, in the standard $GF(q)$-representation $P = [I_r|Q]$ of $M$ with respect to $B$, the column $e^P$ corresponding to $e$ and the column $f^P$ corresponding to $f$ have at most $k$ zeroes. By row and column scaling, we may assume that $e^P$ has all ones except that the first $k$ entries may be zero. 

Let $f^{P'}$ be the column obtained by removing the first $k$ entries of $f^P$. Suppose $f^{P'}$ has $k+2$ non-zero entries that are all equal. Then $f^P, e^P$, and at most $r-(k+2)$ columns from $I_r$ can be combined linearly over $GF(q)$ to obtain a column that has all its entries equal to zero. This would create a circuit of $M$ that contains $\{e,f\}$, and has size at most $r-k$, a contradiction. Therefore, $f^{P'}$ has at most $k+1$ non-zero entries that are equal. Since the field $GF(q)$ has $q-1$ non-zero elements, it follows that $f^{P'}$ has at most $(k+1)(q-1)$ non-zero entries. Additionally, $f^{P'}$ has at most $k$ zeroes so it has at most $kq+q-1$ entries. Therefore, $f^P$ has at most $kq+q-1+k$ entries so the rank of $M$ is at most $(q+1)(k-1)+2q$.
\end{proof}

\begin{corollary}
Let $M$ be a simple and cosimple $GF(q)$-matroid that has two $k$-loose elements. Then $r(M) \leq (q+1)(k-1)+2q$. 
\end{corollary}

The following is a restatement of Corollary \ref{rank_bound_paving}. It follows from Theorem \ref{rank_bound_two_loose} and provides the bound whose existence was established by Rajpal \cite[Proposition 8]{rajpal2}.

\begin{corollary*}
Let $M$ be a simple $GF(q)$-matroid with no coloops that is $k$-paving. Then $M$ is a circuit, or $r(M) \leq (q+1)(k-1)+2q$. 
\end{corollary*}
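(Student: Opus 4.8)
The plan is to derive the corollary as an immediate consequence of Theorem~\ref{rank_bound_two_loose}. Since $M$ is $k$-paving, \emph{every} element of $M$ is $k$-loose; in particular any two distinct elements $e$ and $f$ are $k$-loose. So the only obstruction to applying Theorem~\ref{rank_bound_two_loose} and concluding $r(M) \leq (q+1)(k-1)+2q$ is the possibility that, for \emph{every} choice of a pair $\{e,f\}$, this pair forms a cocircuit of $M$. The heart of the argument is therefore to show that this degenerate situation forces $M$ to be a circuit.

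First I would dispose of the trivial small cases: if $M$ has at most two elements, then (being simple and coloopless) it is either empty or a two-element circuit, and the bound holds vacuously or trivially; we may thus assume $|E(M)| \geq 3$. Next, pick any element $e \in E(M)$. If there is some $f \in E(M) \setminus \{e\}$ for which $\{e,f\}$ is \emph{not} a cocircuit, then applying Theorem~\ref{rank_bound_two_loose} to the pair $\{e,f\}$ (both $k$-loose by the $k$-paving hypothesis, and $M$ has no coloops) gives $r(M) \leq (q+1)(k-1)+2q$ and we are done. So the remaining case is that $\{e,f\}$ is a cocircuit of $M$ for every $f \in E(M)\setminus\{e\}$.

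In that remaining case I would argue that $M$ is a circuit. The key observation is that distinct cocircuits of a matroid cannot be nested, and more to the point two distinct two-element cocircuits cannot share a common element: if $\{e,f\}$ and $\{e,g\}$ were both cocircuits with $f \neq g$, then their symmetric difference $\{f,g\}$ would contain a cocircuit (by the cocircuit elimination / union-of-cocircuits property applied in the dual), forcing $\{f,g\}$ itself to be a cocircuit, and then $\{e,f\}$, $\{e,g\}$, $\{f,g\}$ would all be cocircuits on the three-element set $\{e,f,g\}$, which is impossible since the complement of a cocircuit is a hyperplane and a rank-$(r-1)$ flat cannot have three different complements inside a three-element set unless $|E(M)| = 2$. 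Hence $\{e,f\}$ can be a cocircuit for at most one $f$, so if it is a cocircuit for \emph{all} $f \neq e$, then $|E(M)\setminus\{e\}| = 1$, i.e. $|E(M)| = 2$, contradicting $|E(M)| \geq 3$. Therefore the degenerate case does not arise once $|E(M)| \geq 3$, and the bound holds. (Alternatively, and perhaps more cleanly: $M$ having a two-element cocircuit containing each element means $M^*$ has a two-element circuit, i.e. a pair of parallel elements, through each element; since $M$ is simple, $M^*$ cosimple arguments pin down the structure, but the cleanest route is to observe directly that if every element lies in a two-cocircuit, then by the contrapositive of Theorem~\ref{rank_bound_two_loose} either $r(M)$ is bounded or $M$ is forced to be a circuit, the latter being exactly the case where $M$ has no basis avoiding two prescribed elements because every cobasis is tiny.)

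The step I expect to be the main obstacle is making the ``degenerate case $\Rightarrow$ circuit'' implication airtight: one must rule out, or correctly characterize, matroids in which every two-element subset containing a fixed element is a cocircuit. The safest write-up uses the dual formulation --- $\{e,f\}$ is a cocircuit of $M$ iff $\{e,f\}$ is a circuit (a parallel pair) of $M^*$ --- together with the fact that the relation ``$x$ parallel to $y$'' is an equivalence relation on the ground set of a matroid. If every $f \neq e$ is parallel to $e$ in $M^*$, then all of $E(M)$ lies in a single parallel class of $M^*$, so $M^*$ has rank at most $1$, whence $M$ has corank at most $1$; combined with $M$ coloopless and simple this forces $M$ to be a circuit (corank exactly $1$, and simple). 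That is the cleanest phrasing and I would adopt it.
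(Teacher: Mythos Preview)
Your final approach---passing to the dual and using that parallelism is an equivalence relation, so that if every $f\neq e$ is parallel to $e$ in $M^*$ then $r(M^*)\le 1$ and hence $M$ is a circuit---is correct and is essentially the argument the paper has in mind (the paper applies Theorem~\ref{rank_bound_two_loose} to \emph{all} pairs rather than pairs through a fixed $e$, and then simply asserts ``it follows that $M$ is a circuit''; your dual argument is exactly how one justifies that assertion). So the proof you say you would adopt is fine and aligns with the paper.

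However, the intermediate argument you sketch is wrong and should be discarded, not merely replaced. The claim that ``two distinct two-element cocircuits cannot share a common element'' is false: in any circuit $U_{n-1,n}$ with $n\ge 3$, \emph{every} $2$-subset is a cocircuit (since $M^*=U_{1,n}$), so $\{e,f\}$, $\{e,g\}$, and $\{f,g\}$ are simultaneously cocircuits. Your hyperplane-counting objection does not apply: the complements $E\setminus\{e,f\}$, $E\setminus\{e,g\}$, $E\setminus\{f,g\}$ are three perfectly good distinct hyperplanes. Consequently the conclusion ``the degenerate case does not arise once $|E(M)|\ge 3$'' is the opposite of the truth---the degenerate case is precisely the case where $M$ is a circuit, which is one of the two alternatives in the statement. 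A minor related slip: a ``two-element circuit'' is $U_{1,2}$, which is not simple, so the $|E(M)|=2$ case is in fact vacuous under the hypotheses.
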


\begin{proof}
Note that every element of $M$ is $k$-loose. If $r(M)>(q+1)(k-1)+2q$, it follows from Theorem \ref{rank_bound_two_loose} that every pair of elements of $M$ is in a cocircuit of size two. It follows that $M$ is a circuit. 
\end{proof}

The following special cases of Corollary \ref{rank_bound_paving} are noteworthy. The binary case was shown by Rajpal \cite[Proposition 6]{rajpal2}. 

\begin{corollary}
\label{binary_rajpal}
Let $M$ be a simple binary matroid with no coloops that is $k$-paving. Then $M$ is a circuit, or $r(M) \leq 3k+1$.  
\end{corollary}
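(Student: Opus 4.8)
The plan is to specialize Corollary~\ref{rank_bound_paving} to the case $q=2$. Setting $q=2$ in the general bound gives $r(M) \leq (2+1)(k-1)+2\cdot 2 = 3(k-1)+4 = 3k+1$, which is exactly the claimed inequality. So the proof is essentially a one-line substitution, but I would spell out the logic so the reader sees why the dichotomy of Corollary~\ref{rank_bound_paving} forces the stated conclusion.

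\begin{proof}
Since $M$ is binary, it is a $GF(2)$-matroid, so we may apply Corollary~\ref{rank_bound_paving} with $q = 2$. That corollary states that $M$ is a circuit or $r(M) \leq (q+1)(k-1)+2q$. Substituting $q = 2$, the bound becomes
\[
r(M) \leq (2+1)(k-1) + 2\cdot 2 = 3(k-1) + 4 = 3k+1,
\]
which is the desired inequality. Hence $M$ is a circuit, or $r(M) \leq 3k+1$, as claimed.
\end{proof}

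There is no real obstacle here: the result is an immediate corollary obtained by evaluating the already-established $GF(q)$ bound at the prime power $q = 2$. The only thing to double-check is that the arithmetic $(q+1)(k-1)+2q$ at $q=2$ indeed simplifies to $3k+1$ and not, say, $3k+2$; expanding carefully, $(3)(k-1) + 4 = 3k - 3 + 4 = 3k + 1$, so the constant is correct. One could alternatively note that this recovers Rajpal's \cite[Proposition 6]{rajpal2} with an independent (and in fact identical-strength) derivation, but no additional work is needed.
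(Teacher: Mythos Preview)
Your proof is correct and is exactly the approach the paper takes: Corollary~\ref{binary_rajpal} is presented there as the $q=2$ specialization of Corollary~\ref{rank_bound_paving}, with the bound $(q+1)(k-1)+2q$ evaluating to $3k+1$.
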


\begin{corollary}
Let $M$ be a simple ternary matroid with no coloops that is $k$-paving. Then $M$ is a circuit, or $r(M) \leq 4k+2$.  
\end{corollary}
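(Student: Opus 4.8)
The plan is to obtain this statement as the specialization $q = 3$ of Corollary \ref{rank_bound_paving}. A ternary matroid is, by definition (following \cite{text}), one that is representable over the field with three elements, so every simple ternary matroid is in particular a simple $GF(3)$-matroid. Hence, if $M$ is a simple ternary matroid with no coloops that is $k$-paving, the hypotheses of Corollary \ref{rank_bound_paving} are met verbatim with $q = 3$, and that corollary immediately gives that $M$ is a circuit or $r(M) \le (3+1)(k-1) + 2\cdot 3$.

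The only remaining step is the arithmetic simplification $(3+1)(k-1) + 2\cdot 3 = 4(k-1) + 6 = 4k + 2$. Substituting this into the conclusion of Corollary \ref{rank_bound_paving} yields that $M$ is a circuit or $r(M) \le 4k + 2$, which is exactly the assertion to be proved.

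There is essentially no genuine obstacle: the result is a direct instantiation of the general $GF(q)$ rank bound, in the same way that the binary case (Corollary \ref{binary_rajpal}) is the instantiation $q = 2$, giving $3(k-1) + 4 = 3k+1$. The one thing worth flagging in the write-up is the (standard) convention that ``ternary'' means $GF(3)$-representable, so that one may legitimately take $q = 3$; beyond that the proof is a single invocation of Corollary \ref{rank_bound_paving}.
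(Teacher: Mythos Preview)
Your proposal is correct and matches the paper's approach exactly: the corollary is stated without proof as the $q=3$ specialization of Corollary~\ref{rank_bound_paving}, and your arithmetic $(3+1)(k-1)+2\cdot 3 = 4k+2$ is the entire content.
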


\section{$k$-paving binary matroids}

Theorem \ref{size_bound_one_k_loose} bounds the size of a binary matroid with a $k$-loose element and rank at least $3k+2$. 
By Corollary \ref{binary_rajpal}, the rank of binary $k$-paving matroids is at most $3k+1$. Theorem \ref{size_bound_paving} bounds the size of these matroids. The following is its restatement.

\begin{theorem*}
Let $M$ be a simple binary $k$-paving matroid of rank $r$, where $M$ is not a circuit and $r \geq k+4$, and let $t = \lfloor \frac{3k+1-r}{2} \rfloor$. Then $r(M) \leq 3k+1$, and $$|E(M)| \leq (r+1) + \sum_{i=0}^t \binom{k}{i}.$$
\end{theorem*}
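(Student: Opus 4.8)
The plan is to pass to a standard binary representation $P=[I_r\mid Q]$ with respect to a carefully chosen basis and to exhibit an injection from all but one of the non-basis elements into the subsets of size at most $t$ of a fixed $k$-element set; since $|E(M)|=r+|Q|$, this yields the stated count. The rank bound $r(M)\le 3k+1$ is immediate from Corollary~\ref{binary_rajpal} (where we use that $M$ is coloopless and not a circuit). Before choosing the representation I would reduce to the case that $M$ is $k$-paving but not $(k-1)$-paving: letting $j\le k$ be the least integer with $M$ being $j$-paving — note $j\ge1$, since a coloopless simple binary non-circuit is not $0$-paving — it suffices to prove the bound with $j$ in place of $k$, because $t$ and every binomial $\binom{\ell}{i}$ are nondecreasing in $\ell$, and $r\ge k+4\ge j+4$, $r\le 3j+1$ still hold. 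So assume some element $e$ is $k$-loose but not $(k-1)$-loose; then $e$ is not a coloop, and combining the two conditions there is a circuit $C\ni e$ with $|C|=r-k+1$. Extending the independent set $C\setminus e$ to a basis $B$ gives $e\notin B$, and in $P=[I_r\mid Q]$ relative to $B$ the column $e^P$ has support $C\setminus e$, that is, exactly $k$ zero entries; after relabelling $B$, say the zero set of $e^P$ is $Z_e=\{1,\dots,k\}$.

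Next I would record two structural facts, each obtained by applying $k$-paving to the cycle arising from one or two columns of $Q$. First, every column of $Q$ has at most $k$ zeros, since its fundamental circuit with respect to $B$ has size more than $r-k$. Second, for distinct columns $q,q'$ of $Q$ we have $\mathrm{wt}(q^P+q'^P)\ge r-k-1$: the set $\{q,q'\}\cup\{b_i:(q^P+q'^P)_i=1\}$ is a cycle, it is a disjoint union of at most two circuits (any circuit inside it meets $\{q,q'\}$, as a subset of a basis is independent), and each of those circuits has size more than $r-k$. Combining these with $|Z_e|=k$, for every column $q\ne e$ of $Q$ we get $2\,|Z_q\cap Z_e|=|Z_q|+|Z_e|-|Z_q\triangle Z_e|\le 2k-(r-k-1)$, hence $|Z_q\cap Z_e|\le\lfloor(3k+1-r)/2\rfloor=t$.

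The crux is to show that $q\mapsto Z_q\cap Z_e$ is injective on $Q\setminus\{e\}$. Suppose distinct $q,q'\in Q\setminus\{e\}$ satisfy $Z_q\cap Z_e=Z_{q'}\cap Z_e$. The key step is to look not at $q^P+q'^P$ but at the three-term sum $s:=q^P+q'^P+e^P$. On coordinates $1,\dots,k$ this vanishes (the three entries are $0,0,0$ or $1,1,0$), while on coordinates $k+1,\dots,r$ its support is the complement of $Z_q\triangle Z_{q'}$; since $Z_q\triangle Z_{q'}\subseteq\{k+1,\dots,r\}$ has size $\mathrm{wt}(q^P+q'^P)\ge r-k-1$, that complement has size at most $1$. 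Thus $s=0$ or $s=b_m^P$ for some $m>k$. In the first case $\{e,q,q'\}$ is a circuit of size $3$; in the second, $\{e,q,q',b_m\}$ is a circuit of size $4$, each of its three-element subsets being independent because the sum of the three corresponding columns equals the fourth column, which is nonzero. Either outcome contradicts the fact that every circuit of $M$ has size more than $r-k\ge 4$.

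Injectivity then gives $|Q\setminus\{e\}|\le\sum_{i=0}^t\binom{k}{i}$, so $|E(M)|=r+|Q|\le(r+1)+\sum_{i=0}^t\binom{k}{i}$. The step I expect to be the real obstacle is the injectivity argument, specifically the realization that one should add the distinguished column $e^P$ to the pair $q^P,q'^P$ so that the $\{1,\dots,k\}$-block cancels, and then the care needed to check that the resulting vector of weight at most $1$ genuinely produces a circuit of size $3$ or $4$ rather than merely a dependent set. The reduction to the case that $M$ is not $(k-1)$-paving and the monotonicity of the bound in $k$ are routine but should be stated explicitly.
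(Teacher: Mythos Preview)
Your proof is correct and follows essentially the same route as the paper's: both choose a shortest circuit $C$ of size $r-k+1$, set up $P=[I_r\mid Q]$ with $e^P$ having zero set $\{1,\dots,k\}$, show that any two columns of $Q\setminus\{e^P\}$ with the same first $k$ entries would combine with $e^P$ to give a circuit of size at most four, and then bound the number of admissible first-$k$ patterns by $\sum_{i=0}^t\binom{k}{i}$ via the inequality $|Z_q\cap Z_e|\le t$; your reduction to the least $j$ with $M$ $j$-paving matches the paper's closing paragraph handling the case where no circuit of size $r-k+1$ exists.
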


\begin{proof}
By Corollary \ref{binary_rajpal}, it is evident that $r(M) \leq 3k+1$. Suppose that $M$ has a circuit $C$ of size $r-k+1$. For an element $e$ of $C$, let $B$ be a basis of $M$ containing $C-e$. Consider the standard binary representation $P = [I_r|Q]$ of $M$ with respect to the basis $B$. 
\begin{sublemma}
\label{new_one}
Let $h$ be the sum of $m$ columns in $Q$. Then $h$ has at most $k+(m-1)$ zeroes. In particular, for any two columns $a$ and $b$ in $Q$, the sum $a+b$ has at most $k+1$ zeroes.
\end{sublemma}

Assume that $h$ has at least $k+m$ entries that are zero. Then, at most $r-(k+m)$ columns in $I_r$ can be added to $h$ to obtain the column with all entries equal to zero. This would result in a circuit of size at most $r-k$ in $M$, a contradiction. Therefore \ref{new_one} holds.

Note that the column $e^P$ in $Q$ corresponding to the element $e$ of $M$ has exactly $k$ zeroes. By reordering rows and columns of $P$, we may assume that the first $k$ entries of $e^P$ are zero. We denote the set of indices of the zero entries of a column $f^P$ in $Q$ by $\text{zeroes}(f^P)$. The entries of $f^P$ excluding the first $k$ entries are the \textit{root entries} of $f^P$. Let $g^P$ be a column in $Q-e^P$ such that $|\text{zeroes}(e^P) \cap \text{zeroes}(g^P)| = l$ is maximum. 

\begin{sublemma}
\label{new_two}
$r \leq 3k+1-2l.$
\end{sublemma}

Observe that $g^P$ has at most $k$ entries that are zero; otherwise, $M$ has a circuit of size less than $r-k$. By \ref{new_one}, it follows that $g^P$ has at most $k-l$ root entries that are zero. Furthermore, by \ref{new_one}, at most $k+1-l$ root entries of $g^P$ are equal to one. Therefore, $g^P$ has at most $k + (k-l) + (k+1-l) = 3k+1-2l$ entries. Thus, \ref{new_two} holds.

Let $F$ be a set of all columns in $Q-e^P$ such that, for any pair $f^P, g^P$ in $F$, the sum $f^P + g^P$ has its first $k$ entries equal to zero. Evidently, the first $k$ entries of all the columns in $F$ are identical.

\begin{sublemma}
\label{new_three}
$|F| \leq 1.$
\end{sublemma}

Suppose not, and let $a^P$ and $b^P$ be two columns in $F$. Note that the first $k$ entries of $a^P+b^P$ are all zero, so by \ref{new_one}, at most one root entry of $a^P+b^P$ is zero. It follows that $a^P+b^P+e^P$ has at most one non-zero entry. Therefore $M$ has a circuit of size at most four. Since $r \geq k+4$ and every circuit is of size greater than $r-k$, this is a contradiction. Thus, \ref{new_three} holds.

\begin{sublemma}
\label{new_four}
$|E(M)| \leq (r+1) + \sum_{i=0}^l \binom{k}{i}$.
\end{sublemma}

Observe that each column in $Q-e^P$ has at most $l$ zeroes among the first $k$ entries. It follows that there are at most $\sum_{i=0}^l \binom{k}{i}$ sets $F$ in $Q-e^P$. Therefore, by \ref{new_three}, $|E(M)| \leq (r+1) + \sum_{i=0}^l \binom{k}{i}$ so \ref{new_four} holds.  

By \ref{new_two}, $l \leq \lfloor \frac{3k+1-r}{2}\rfloor = t$ so by \ref{new_four}, the result follows. 

If $M$ has no circuit of size $r-k+1$, let $w$ be the smallest number such that $M$ has a circuit of size $r-w+1$. Clearly, $w < k$. By the same argument, it follows that $|E(M)| \leq (r+1) + \sum_{i=0}^s \binom{w}{i}$, where $s = \lfloor \frac{3w+1-r}{2} \rfloor$. Since $w < k$, it follows that $s \leq t$, and  $|E(M)| \leq (r+1) + \sum_{i=0}^t \binom{k}{i}$.
\end{proof}

We apply Theorem \ref{size_bound_paving} to $3$-paving binary matroids and prove Corollary \ref{3_paving}.

\begin{proof}[Proof of Corollary \ref{3_paving}]
It is clear that $r(M) \leq 10$. Note that if $r(M)$ is $7$, then $t$ equals $1$ so by Theorem \ref{size_bound_paving}, $|E(M)|$ is at most $12$. Similarly, if $r(M) = 8 , 9, $ and $10$, then $|E(M)|$ is at most $13, 11,$ and $12$, respectively. 
\end{proof}

\section*{Acknowledgment}

The author thanks James Oxley for helpful suggestions.

\end{document}